\documentclass{article}
\usepackage[utf8]{inputenc}
\usepackage{amsmath, amsthm, amsfonts, color, soul,setspace, geometry}
\usepackage{graphicx}
\usepackage{setspace}
\newtheorem{theorem}{Theorem}[section]
\newtheorem{lemma}[theorem]{Lemma}
\newtheorem{proposition}[theorem]{Proposition}
\newtheorem{corollary}[theorem]{Corollary}

\newtheorem{definition}[theorem]{Definition}

\geometry{margin = 1.5in}

\newcommand{\fl}[2]{\left\lfloor\frac{#1}{#2}\right\rfloor} 
\newcommand{\Sb}[1]{Sb_#1} 

\newcommand{\kSb}{$k\,$-synchronous bondage} 
\newcommand{\kSbn}{$k\,$-synchronous bondage number} 

\newcommand{\twoSbn}{$2$-synchronous bondage number}
\newcommand{\twoSb}{$2$-synchronous bondage}
\newcommand{\BG}[1]{\text{BG}(#1)}
\newcommand{\MBG}[1]{\text{MBG}(#1)}

\title{Generalized Bondage Number: The \kSbn\ of a Graph}
\author{Anaya, Rey \\[5pt]
Belmonte, Alvaro \\[5pt]
Shank, Nate \\[5pt]
Sinani, Elise \\[5pt]
Walker, Bryan}
\date{\today}

\begin{document}
\maketitle
\begin{abstract}
 We investigate a generalization of the bondage number of a graph called the \textit{\kSbn}. The \kSbn\ of a graph is the smallest number of edges that, when removed, increases the dominating number by $k$.  In this paper, we discuss the \twoSbn\ and then generalize to \kSbn.  We present \kSbn\ for several graph classes and give bounds for general graphs. We propose this characteristic as a metric of the connectivity of a simple graph with possible uses in the field of network design and optimization.
\end{abstract}


\section{Introduction}
\indent Graphs serve as a mathematical tool for analyzing networks where the vertices of graphs can represent stations, transmitters, people, computers, cell phones, or cities while the edges demonstrate the connections between these objects, such as railroads, power lines, friendships, computer connections, signals, or roads. Representing such networks as graphs allows us to apply the tools and properties of graph theory to real world problems thereby providing rigorously proven solutions. 

An unreliable network is one that can be easily disrupted either maliciously or accidentally. To ensure the reliability of a network, an understanding of its purpose and sensitivities must be taken into account throughout its construction and preservation. Thus, we must determine the minimum requirements in order for a network to remain operational as well as which parts of the network can break down and thus cause a failure state. 

When modeling networks with graphs, we can analyze different failure states and, therefore, quantify network reliability. For instance, if the network is operational as long as the graph is connected, then the graph is in a failure state once there are two or more components. Then we can evaluate the network's strength in comparison to other potential network designs by counting the minimum number of vertices which must be removed in order to render the network inoperable. This measure of connectivity is classically defined in \cite{beineke1967connectivity} as the minimum number of vertices whose removal results in a disconnected graph. Other considerations that built upon Harary's foundation included \textit{component order vertex connectivity} and \textit{component order edge connectivity} (see \cite{Gross2006}, \cite{Gross2013}, and \cite{Gross2015}). In the former, network failure occurs when the elimination of vertices causes every component of the graph to have an order less than a given positive integer $k$. The latter is the same in every respect except that we remove edges not vertices.

In this paper, we consider edge removal and its impact on the dominating number of a graph. The minimum number of edge removals that increases the dominating number by one was first considered in \cite{bauer1983domination} and called \textit{dominating line-stability}. Later, in \cite{Fink}, the authors define this same concept to be the \textit{bondage number} of a graph. Here we expand upon the concept of bondage number by investigating edge removals that increase the dominating number by a specified number, $k$. We consider the graph, and hence the network it represents, to be in a failure state when the dominating number increases by $k$. In order to achieve a failure state, we look at the deterministic model; that is, we remove edges intelligently to determine the size of the smallest edge set the removal of which results in a failure state.


\section{Background and Definitions}

\indent For our purposes, all graphs are assumed to be undirected simple graphs; that is, graphs with no loops and in which no two vertices share more than a single edge between them. For other graph theory notation and terminology we will use \cite{West2001}.

Given a graph $G = (V,E)$, a set of vertices $D\subseteq V$ is a \textit{dominating set} if all vertices in $V$ are either in $D$ or adjacent to a vertex in $D$. The \textit{dominating number} of the graph, $\gamma(G)$, is the minimum size of a dominating set in $G$. The study of this characteristic is summarized efficiently in \cite{FundDom}.

In 1990, Fink et al.\@\cite{Fink} introduced, in its modern form, the idea of \textit{bondage number}. The bondage number of a graph, denoted $b(G)$, is the size of the smallest subset of edges of $G$, which, when removed, increases the dominating number. In 2013, Xu \cite{xu2013bondage} defines \textit{bondage set} as $\mathcal{E} \subseteq E$ such that $\gamma(G -\mathcal{E}) > \gamma(G)$. Furthermore a \textit{minimum bondage set} is a bondage set\ with the smallest cardinality. We also know if $\mathcal{E}$ is a minimum bondage set, then $\gamma(G -\mathcal{E}) = \gamma(G) + 1$ since the removal of any single edge can increase the dominating number by at most one.

Given a set of graphs $\mathcal{G}$, we define the \textit{minimum bondage number of $\mathcal{G}$} as $b(\mathcal{G}) = \min\{b(G): G \in \mathcal{G}\}.$ Thus, $b(\mathcal{G})$ is the minimum bondage number over all graphs $G \in \mathcal{G}$. 

Given a graph $G = (V,E)$, we define the \textit{bondage graphs of $G$}, denoted $\BG{G}$, to be the set of all graphs $G' = G-\mathcal{E}$ for some bondage set $\mathcal{E} \subseteq E$. 

Similarly, given a graph $G = (V,E)$, we define the \textit{minimum bondage graphs of $G$}, denoted $\MBG{G}$, to be the set of all graphs $G' = G-\mathcal{E}$ for any minimum bondage set $\mathcal{E} \subseteq E$. 

We will say that $G'$ is the result of a \textit{bondage move} if $G'\in \BG{G}$. So, a bondage move is the removal of a bondage set from $G$ and we will say the \textit{size of the bondage move} is the size of the bondage set. Similarly, we will say that $G'$ is the result of a \textit{minimum bondage move} if $G' \in \MBG{G}$ and we will say the \textit{size of a minimum bondage move} is the size of the minimum bondage set. A minimum bondage move is, therefore, the removal of a minimum bondage set from $G$. 

So, if we are completing iterative bondage moves on some graph $G$, then $b(\MBG{G})$ is the minimum bondage number over the set of graphs resulting from all possible first bondage moves on $G$.

In this paper, we define \textit{\kSb\ set}, \textit{minimum \kSb\ set}, and \textit{\kSbn} which generalize a bondage set, minimum bondage set, and bondage number, respectively, by increasing the dominating number by $k$. Throughout this paper, we will assume that $|V| \geq k+\gamma(G)$ since  the dominating number cannot be larger than the order of the graph.

\begin{definition}[\kSb\ set] 
    Given a graph $G = (V, E)$ and a positive integer $k$, a set $\mathcal{E}\subseteq E$ is a \kSb\ set if $\gamma(G-\mathcal{E}) = \gamma(G) + k$.
\end{definition}

So a set of edges is a \kSb\ set if the removal of the edges increases the dominating number by $k$.

\begin{definition}[minimum \kSb\ set]
    Given a graph $G=(V, E)$, a set $\mathcal{E} \subseteq E$ is a minimum \kSb\ set if $\gamma(G-\mathcal{E}) = \gamma(G) + k$ and for all $E'\subseteq E$ with $|E'|< |\mathcal{E}|$, $\gamma(G-E')<\gamma(G)+k$.
\end{definition}

Therefore, a minimum \kSb\ set is a subset of the edge set whose removal increases the dominating number by $k$ and there does not exist a smaller subset that results in the dominating number increasing by $k$.

\begin{definition}[\kSbn]
    Given a graph $G=(V, E)$ and a positive integer $k$, the \kSbn\ of $G$, denoted $\Sb{k}(G)$, is the size of a minimum \kSb\ set.
\end{definition}

Thus, the \kSbn\ of $G$ is the minimum number of edges that can be removed so that the dominating number increases by $k$. 

Connecting the previous definitions with bondage number, we see that the study of $\Sb{1}$ would be the same as the study of the bondage number, as indicated in the following proposition.

\begin{proposition}
    For any graph $G$, $b(G) = \Sb{1}(G)$.
\end{proposition}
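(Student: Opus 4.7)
The plan is to prove the proposition by establishing both inequalities $\Sb{1}(G) \le b(G)$ and $b(G) \le \Sb{1}(G)$. The whole argument is really an unpacking of the two definitions together with the observation, explicitly noted in the paper, that removing a single edge raises $\gamma$ by at most one.

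First I would dispatch $b(G) \le \Sb{1}(G)$. Let $\mathcal{E}$ be a minimum $1$-synchronous bondage set, so $|\mathcal{E}| = \Sb{1}(G)$ and $\gamma(G - \mathcal{E}) = \gamma(G) + 1$. In particular $\gamma(G-\mathcal{E}) > \gamma(G)$, so $\mathcal{E}$ meets the definition of a bondage set and hence $b(G) \le |\mathcal{E}| = \Sb{1}(G)$.

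For the reverse inequality, let $\mathcal{E}$ be a minimum bondage set, so $|\mathcal{E}| = b(G)$ and $\gamma(G-\mathcal{E}) \ge \gamma(G)+1$. I would first argue that equality holds: if $\gamma(G-\mathcal{E}) \ge \gamma(G)+2$, then deleting any single $e \in \mathcal{E}$ from the removed set gives $\gamma(G-(\mathcal{E}\setminus\{e\})) \ge \gamma(G-\mathcal{E}) - 1 \ge \gamma(G)+1 > \gamma(G)$ by the one-edge bound, making $\mathcal{E}\setminus\{e\}$ a strictly smaller bondage set and contradicting the minimality of $\mathcal{E}$. Hence $\gamma(G - \mathcal{E}) = \gamma(G)+1$, so $\mathcal{E}$ is a $1$-synchronous bondage set. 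To see it is minimum among such sets, suppose some $E' \subseteq E$ with $|E'| < |\mathcal{E}|$ satisfied $\gamma(G-E') = \gamma(G)+1$; then $E'$ is also a bondage set, again contradicting the minimality of $\mathcal{E}$ in the bondage sense. Therefore $\Sb{1}(G) \le |\mathcal{E}| = b(G)$.

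There is no genuine obstacle here: the only step that requires even a short argument is the ``exactly $+1$'' claim for a minimum bondage set, and that is already handled by the one-edge-at-a-time reasoning the authors invoked when introducing the bondage number. Everything else is a direct comparison of the two definitions, so the proof should be just a few lines.
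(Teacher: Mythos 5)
Your argument is correct and is exactly the unpacking of definitions the paper has in mind: the paper states this proposition without proof, treating it as immediate, and the only nontrivial ingredient you use --- that a minimum bondage set increases $\gamma$ by exactly one because a single edge removal increases it by at most one --- is the observation the authors already record in Section 2. Nothing to change.
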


However, the \kSbn\ differs from previous studies on $b(G)$ whenever $k \geq 1$. Our approach is novel in that we can specify any desired increase by setting the value of $k$.

In this paper, we consider \twoSb\ in section \ref{SectionSb2} where we analyze the relationship between \twoSb\ and consecutive bondage moves. We also provide some bounds specific to \twoSb\ for general graphs.  In section \ref{SectionSbk}, we proceed to demonstrate several properties of \kSbn s and provide proofs for the \kSb\ of paths, cycles, trees and complete graphs. We conclude by presenting additional areas of potential research into the value of \kSbn s.


\section{Properties of $\Sb{2}$}
\label{SectionSb2}
The value of studying $\Sb{k}$ is two-fold. First, suppose we obtain $\Sb{k}(G)$ for some graph $G=(V,E)$ by iterative minimum bondage moves. This method will require us to analyze graphs outside of the families typically considered when studying $b(G)$ which adds a level of complexity. Beyond this, however, we find that for some graphs, $G$, $\Sb{k}(G)$ is less than the summation of the size of $k$ iterative minimum bondage moves. For example, consider graph $H$ in Figure \ref{Shank}. Notice $\gamma(H)=5$ and a minimum dominating set is $D=\{c, f, g, i, k\}$. Now, $\Sb{2}(H) = 4$ by removing the edges of the cycle; this removal results in vertices $j, k,$ and $l$ being included in every minimum bondage set as they are isolated vertices and the remaining connected component of $H$ has a dominating number of 4 and a minimum dominating set of $D' = \{c,f,g,i\}$. However, $b(H) = 2$ by removing edges $ac$ and $bd$, and $b(\MBG{H})=3$.  For example, $b(G) = \{ac, bd\}$ resulting in $G'$ and $b(G') = \{ad,ab,bc\}$. Thus, two successive minimum bondage moves requires the removal of 5 edges. 

\begin{figure}[htb]
     \centering
     \includegraphics[scale = .6]{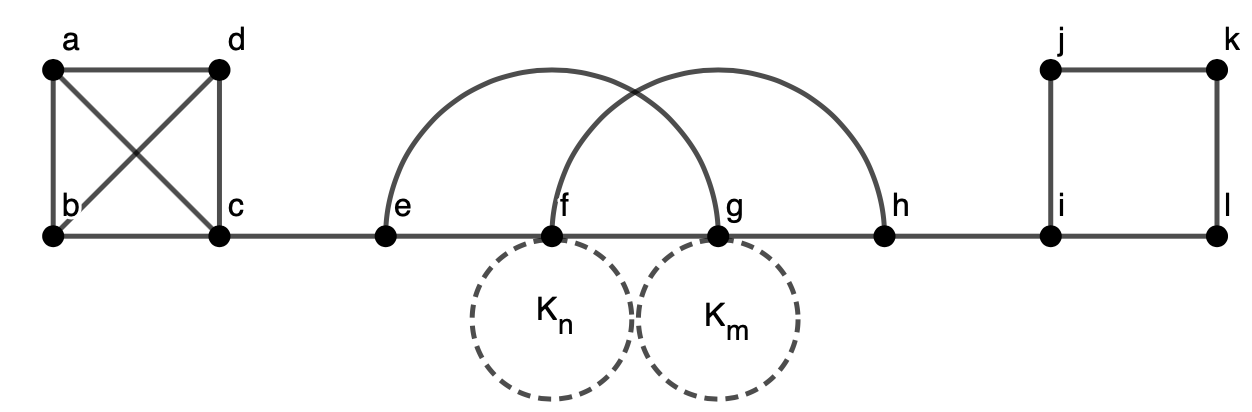}
     \caption{Graph $H$ with $n, m \geq 4$.}
     \label{Shank}
\end{figure}
Alternatively, we can show that there are circumstances under which the sum of the sizes of two successive minimum bondage moves is equal to $\Sb{2}$. 

\begin{theorem}\label{stepwise}
    Let $G$ be a graph. If $b(\MBG{G}) \leq 2$ then $\Sb{2}(G) = b(G) + b(\MBG{G})$.
\end{theorem}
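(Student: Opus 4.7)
The plan is to establish two inequalities: $\Sb{2}(G) \leq b(G) + b(\MBG{G})$ and $\Sb{2}(G) \geq b(G) + b(\MBG{G})$. The upper bound is a direct construction of a 2-synchronous bondage set out of two successive minimum bondage moves; the lower bound needs a short case analysis in which the hypothesis $b(\MBG{G}) \leq 2$ plays its role.

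For the upper bound, I would choose a minimum bondage set $\mathcal{E}_1$ of $G$ so that the resulting graph $G' = G - \mathcal{E}_1 \in \MBG{G}$ achieves $b(G') = b(\MBG{G})$; such an $\mathcal{E}_1$ exists by the definition of $b(\MBG{G})$. Then I would take any minimum bondage set $\mathcal{E}_2$ of $G'$, so $|\mathcal{E}_2| = b(\MBG{G})$. Since $\mathcal{E}_2 \subseteq E(G')$ is automatically disjoint from $\mathcal{E}_1$, one has $\gamma(G - (\mathcal{E}_1 \cup \mathcal{E}_2)) = \gamma(G' - \mathcal{E}_2) = \gamma(G') + 1 = \gamma(G) + 2$. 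Thus $\mathcal{E}_1 \cup \mathcal{E}_2$ is a 2-synchronous bondage set of size $b(G) + b(\MBG{G})$, yielding the upper bound. This step uses no hypothesis on $b(\MBG{G})$.

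For the lower bound, I would take an arbitrary 2-synchronous bondage set $\mathcal{E}$ and use the standard fact that removing a single edge changes the dominating number by $0$ or $1$. Ordering $\mathcal{E}$ as $e_1, \ldots, e_t$ and watching $\gamma(G - \{e_1, \ldots, e_j\})$ grow from $\gamma(G)$ to $\gamma(G) + 2$ in unit or zero steps, I can locate the first index $i$ at which the value equals $\gamma(G) + 1$ and set $\mathcal{E}_1 = \{e_1, \ldots, e_i\}$. Then $\mathcal{E}_1$ is a bondage set of $G$, so $|\mathcal{E}_1| \geq b(G)$, and $\mathcal{E} \setminus \mathcal{E}_1$ is a bondage set of $G - \mathcal{E}_1$ because it pushes $\gamma$ from $\gamma(G)+1$ up to $\gamma(G)+2$.

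The crux is the case split on $|\mathcal{E}_1|$. If $|\mathcal{E}_1| = b(G)$, then $G - \mathcal{E}_1 \in \MBG{G}$, giving $|\mathcal{E} \setminus \mathcal{E}_1| \geq b(G - \mathcal{E}_1) \geq b(\MBG{G})$ and hence $|\mathcal{E}| \geq b(G) + b(\MBG{G})$. If instead $|\mathcal{E}_1| > b(G)$, then $|\mathcal{E}_1| \geq b(G)+1$; combined with $\mathcal{E} \setminus \mathcal{E}_1$ being nonempty this gives $|\mathcal{E}| \geq b(G) + 2$, and the hypothesis $b(\MBG{G}) \leq 2$ is precisely what upgrades this to $|\mathcal{E}| \geq b(G) + b(\MBG{G})$. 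I expect this second case to be the main obstacle, since the subset $\mathcal{E}_1$ extracted from $\mathcal{E}$ need not itself be a minimum bondage set of $G$; without the hypothesis there is no reason to expect equality at all, as the graph $H$ in Figure~\ref{Shank} already illustrates.
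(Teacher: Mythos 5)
Your proof is correct and follows essentially the same route as the paper's: bound $\Sb{2}(G)$ above by two successive minimum bondage moves, then decompose an optimal \twoSb\ set into a first bondage move followed by a second, splitting on whether the first move is minimum and using $b(\MBG{G}) \leq 2$ to close the non-minimum case. Your explicit prefix-ordering argument showing that such a decomposition exists is a detail the paper's proof takes for granted, so your write-up is, if anything, the more complete of the two.
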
 

\begin{proof}
     Since the size of two successive minimum bondage moves serves as an upper bound for $\Sb{2}(G)$, we will assume for the sake of contradiction that there exists a graph $G' \in \MBG{G}$ so that $b(G') = b(\MBG{G}) \leq 2$ and $\Sb{2}(G) < b(G) + b(G')$. 
     Since $\Sb{2}(G) \neq b(G) + b(G')$, there exists a graph $G'' \in \BG{G}-\MBG{G}$ where $G''$ is the result of a bondage move on $G$ of size $y$. Therefore, $\Sb{2}(G)=y + b(G'')$. We know that $y\geq b(G)+1$ so $\Sb{2}(G)\geq b(G)+ 1 + b(G'')$. Hence, 
     
     $$b(G) + 1 + b({G''}) \leq \Sb{2}(G) < b(G) + b(G'),$$ 
     which implies $$1 + b({G''}) < b(G').$$
     
     This yields a contradiction since $b(G'') \geq 1$, but we assumed that $b(G') \leq 2$. 
\end{proof}

Theorem \ref{stepwise} together with the bondage number of a path graph from \cite{Fink} as provided below, assists in determining \twoSb\ of path graphs.
 \begin{theorem}\label{pathbondage}
     The bondage number of a path of order $n \geq 2$ is given by
     \[
     b(P_n) = 
     \begin{cases}
     2, \qquad& \textit{if } n = 1(\operatorname{mod}3) \\
     1, \qquad& \text{else}.
     \end{cases}\]
\end{theorem}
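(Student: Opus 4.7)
The plan is to use the closed form $\gamma(P_n) = \lceil n/3 \rceil$ together with the observation that deleting edges from a path partitions it into smaller paths, with the dominating number summing over components. The problem thus reduces to an arithmetic comparison: for which splits $a + b = n$ (or $a + b + c = n$) does $\sum \lceil \cdot / 3 \rceil$ strictly exceed $\lceil n/3 \rceil$?

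In the ``else'' case $n \not\equiv 1 \pmod{3}$, I would exhibit a single edge removal that raises the dominating number by one. Removing a pendant edge splits $P_n$ into $P_1$ and $P_{n-1}$. When $n = 3m$, this yields dominating number $1 + \lceil (3m-1)/3 \rceil = 1 + m$, one more than $\gamma(P_n) = m$. When $n = 3m+2$, the pendant split yields $1 + (m+1) = m+2$, again an increase of one over $\gamma(P_n) = m+1$. Combined with the trivial bound $b(P_n) \geq 1$, this gives $b(P_n) = 1$.

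For $n = 3m+1$, I would claim that no single edge suffices, and prove $b(P_n) \geq 2$ by case analysis on the residues of the two pieces after any split $a + b = 3m+1$. The only residue pairs $(a \bmod 3,\, b \bmod 3)$ consistent with $a + b \equiv 1 \pmod{3}$ are $(0,1)$, $(1,0)$, and $(2,2)$. In each case a direct computation gives $\lceil a/3 \rceil + \lceil b/3 \rceil = m+1 = \gamma(P_n)$, so the dominating number does not change. To then show $b(P_n) \leq 2$, I would delete the two edges incident to the second vertex $v_2$, which splits the path into $P_1 \cup P_1 \cup P_{3m-1}$ with dominating number $1 + 1 + m = m+2 = \gamma(P_n) + 1$.

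The main obstacle is the residue case analysis in the $n \equiv 1 \pmod{3}$ case: each of the three admissible residue pairs must be verified individually, and one must observe that the remaining pairs cannot arise since $a + b \equiv 1 \pmod 3$ forces the two residues to lie in the listed set. The rest of the proof consists of explicit constructions whose correctness is a one-line arithmetic check.
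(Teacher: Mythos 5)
Your proof is correct and complete. Note that the paper does not actually prove this statement---it is imported verbatim from Fink et al.~\cite{Fink}---so there is no in-paper argument to compare against; your write-up supplies a self-contained proof using exactly the kind of arithmetic the paper itself deploys elsewhere (the pendant-edge computations in Lemma 3.3 and the component-sum identity $\gamma(\bigoplus P_{a_i})=\sum\lceil a_i/3\rceil$ in Theorem 4.8). The one piece of genuinely new content you add is the lower bound $b(P_{3m+1})\geq 2$ via the residue pairs $(0,1)$, $(1,0)$, $(2,2)$, and that case check is airtight: in each case $\lceil a/3\rceil+\lceil b/3\rceil=m+1=\gamma(P_n)$, and your two-edge construction at $v_2$ works for every $n\equiv 1\pmod 3$ with $n\geq 4$, which is all such $n\geq 2$.
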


Next, we must show that a pendant edge in $P_n$ is always in a minimum bondage set.
 
\begin{lemma}\label{leftmost}
    For any path graph $P_n$ of order $n\geq 2$ with pendant edge $e$, there exists a minimum bondage set which contains $e$. 
\end{lemma}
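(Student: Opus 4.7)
The plan is to prove the lemma by direct construction, handling the three residue classes of $n$ modulo $3$ separately, since Theorem \ref{pathbondage} already tells us $b(P_n)\in\{1,2\}$ depending on $n\bmod 3$. Label the vertices so that $P_n=v_1v_2\cdots v_n$, let $e=v_1v_2$ be the given pendant edge (the other pendant is symmetric), and recall the standard fact $\gamma(P_n)=\lceil n/3\rceil$.

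First I would handle the easy cases $n\equiv 0\pmod 3$ and $n\equiv 2\pmod 3$, where $b(P_n)=1$. In each of these cases I claim $\{e\}$ itself is a minimum bondage set. Removing $e$ splits $P_n$ into the isolated vertex $v_1$ together with $P_{n-1}$, and so
\[
\gamma(P_n-\{e\}) \;=\; 1 + \left\lceil \frac{n-1}{3}\right\rceil.
\]
A direct computation shows that this equals $\gamma(P_n)+1$ precisely when $n\not\equiv 1\pmod 3$: if $n\equiv 0\pmod 3$ then $1+\lceil(n-1)/3\rceil = 1+n/3 = \gamma(P_n)+1$, and if $n\equiv 2\pmod 3$ then $1+\lceil(n-1)/3\rceil = 1+(n+1)/3 = \gamma(P_n)+1$. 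Since $|\{e\}|=1=b(P_n)$ in these cases, we are done.

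The remaining case $n\equiv 1\pmod 3$ is the main obstacle, because here $b(P_n)=2$ and the computation above gives $\gamma(P_n-\{e\})=\gamma(P_n)$, so $\{e\}$ alone is not a bondage set. For this case I would exhibit the explicit set $\mathcal{E}=\{v_1v_2,\,v_2v_3\}$, which contains $e$ and has size $2=b(P_n)$. Removing $\mathcal{E}$ leaves the two isolated vertices $v_1,v_2$ together with $P_{n-2}$. Since $n\equiv 1\pmod 3$ implies $n-2\equiv 2\pmod 3$, we get
\[
\gamma(P_n-\mathcal{E}) \;=\; 2 + \left\lceil\frac{n-2}{3}\right\rceil \;=\; 2+\frac{n-1}{3} \;=\; \frac{n+5}{3} \;=\; \gamma(P_n)+1.
\]
Thus $\mathcal{E}$ is a bondage set of size $b(P_n)$, hence a minimum bondage set containing $e$. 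Combining all three cases yields the result.

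Since the argument is ultimately just a case check based on Theorem \ref{pathbondage} and the formula for $\gamma(P_n)$, the only delicate point is verifying that the constructed two-edge set in the $n\equiv 1\pmod 3$ case genuinely raises the dominating number; I expect this modular arithmetic verification to be the only spot that requires care, and the rest to be bookkeeping.
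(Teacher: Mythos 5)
Your proof is correct and follows essentially the same approach as the paper: a case analysis on $n \bmod 3$, using $\gamma(P_n)=\lceil n/3\rceil$ and Theorem \ref{pathbondage} to exhibit an explicit bondage set of size $b(P_n)$ containing the pendant edge. The only (immaterial) difference is in the $n\equiv 1\pmod 3$ case, where the paper deletes the two pendant edges at opposite ends to get $P_1\oplus P_1\oplus P_{n-2}$, while you delete the first two edges at one end; both yield the same disjoint union and the same count.
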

 
\begin{proof}
     Given any path graph, $P_n$, of order $n\geq 2$, $\gamma(P_n) = \lceil\frac{n}{3}\rceil$ (see \cite{Fink} for example). Let $e_1$ be a pendant edge in $P_n$. We proceed by cases in using Theorem \ref{pathbondage} to determine $b(P_n)$. 
     
     \textit{\textbf{Case 1}.\textbf{ $n \equiv 0(\operatorname{mod} \:3)$}}: Removing $e_1$ results in $P_1 \oplus P_{n-1}$. The dominating number of the resulting disjoint graph is $\gamma(P_1\oplus P_{n-1}) = \gamma(P_1) + \gamma(P_{n-1})=1+\lceil\frac{n-1}{3}\rceil=1+\frac{n-1+1}{3}=1+\frac{n}{3}=1+\gamma(P_n)$. Since $b(P_n) = 1$ and $\gamma(P_n - \{e_1\}) = 1+\gamma(P_n)$, the set $\{e_1\}$ is a minimum bondage set.
     
     \textit{\textbf{Case 2}.\textbf{ $n \equiv 2(\operatorname{mod} \:3)$}}: Removing $e_1$ results in $P_1 \oplus P_{n - 1}$ whose dominating number is $\gamma(P_1\oplus P_{n-1}) = \gamma(P_1) + \gamma(P_{n-1})=1+\lceil\frac{n-1}{3}\rceil=1+\frac{n-1+2}{3}=1+\frac{n+1}{3}=1+\gamma(P_n)$. Since $b(P_n) = 1$ and $\gamma(P_n - \{e_1\}) = 1+\gamma(P_n)$, the set $\{e_1\}$ is a minimum bondage set.
     
     \textit{\textbf{Case 3. $n \equiv 1(\operatorname{mod} \:3)$}}: Note that $P_n$ with $n>2$ will have two pendant edges, denoted $e_1\text{ and }e_2$. Removing $e_1$ and $e_2$ results in $P_1\oplus P_1\oplus P_{n-2}$. The resulting dominating number of this disjoint graph is $\gamma(P_1\oplus P_1\oplus P_{n-2}) = 2\gamma(P_1) + \gamma(P_{n-2})=2+\lceil\frac{n-2}{3}\rceil=1+\frac{n+2}{3}=1+\gamma(P_n)$. Since $b(P_n) = 2$ and $\gamma(P_n -\{e_1,e_2\}) = 1+\gamma(P_n)$, the set $\{e_1,e_2\}$ is a minimum bondage set.
\end{proof}
 
We now prove \twoSb\ for all $P_n$.
 
\begin{theorem}\label{paths}
     For a path graph, $P_n$,
     $$\Sb{2}(P_n)= \begin{cases}
     2, \: & n\equiv 0(\operatorname{mod} \: 3) \\
     3, \: & \text{else.}
     \end{cases}$$
\end{theorem}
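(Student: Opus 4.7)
The plan is to invoke Theorem \ref{stepwise} case by case on $n \pmod{3}$. Since Theorem \ref{pathbondage} supplies $b(P_n)$ and Lemma \ref{leftmost} guarantees that a pendant edge lies in some minimum bondage set, the only real work is to exhibit a minimum bondage move and compute the bondage number of the resulting graph; as long as that number is at most $2$, Theorem \ref{stepwise} immediately yields $\Sb{2}(P_n) = b(P_n) + b(\MBG{P_n})$.

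When $n \equiv 0 \pmod{3}$, I would take the minimum bondage move consisting of a single pendant edge, yielding $P_1 \oplus P_{n-1}$ with $n - 1 \equiv 2 \pmod{3}$. The isolated $P_1$ contributes no edges, so the bondage number of the disjoint union equals $b(P_{n-1}) = 1$, giving $\Sb{2}(P_n) = 1 + 1 = 2$. When $n \equiv 1 \pmod{3}$, I would instead remove the two pendant edges (the minimum bondage set exhibited in Case 3 of Lemma \ref{leftmost}) to obtain $P_1 \oplus P_1 \oplus P_{n-2}$ with $n - 2 \equiv 2 \pmod{3}$; the same reasoning gives bondage $1$, so $\Sb{2}(P_n) = 2 + 1 = 3$.

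The delicate case is $n \equiv 2 \pmod{3}$. Removing a pendant edge yields $P_1 \oplus P_{n-1}$ with $n - 1 \equiv 1 \pmod{3}$, so $b(P_1 \oplus P_{n-1}) \leq b(P_{n-1}) = 2$ and Theorem \ref{stepwise} applies. To pin $b(\MBG{P_n})$ down to exactly $2$ (which is what I need to extract the value $3$ rather than merely an upper bound), I would rule out the value $1$ by examining every minimum bondage move: any single-edge removal $v_i v_{i+1}$ that raises $\gamma(P_n)$ must, by a short residue computation analogous to the one used in Lemma \ref{leftmost}, satisfy $i \equiv 1 \pmod{3}$, which forces \emph{both} $P_i$ and $P_{n-i}$ to sit in the residue class $1 \pmod{3}$. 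Since $b(P_m) = 2$ for all such $m$, no single further edge removal in the resulting disjoint union can increase its dominating number, so $b(\MBG{P_n}) = 2$ and Theorem \ref{stepwise} yields $\Sb{2}(P_n) = 1 + 2 = 3$. This residue-class propagation --- every successful first cut keeps both new components in the bondage-$2$ class --- is the main obstacle; in the other two cases the pendant-edge observations from Lemma \ref{leftmost} drop the second bondage number to $1$ and the conclusion follows immediately.
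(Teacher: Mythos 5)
Your proposal is correct and follows the paper's overall strategy: combine Theorem \ref{stepwise} with the path bondage numbers from Theorem \ref{pathbondage} and the pendant-edge observations of Lemma \ref{leftmost}. Two small differences are worth noting. For $n\equiv 0 \pmod 3$ the paper does not invoke Theorem \ref{stepwise} at all; it simply removes both pendant edges, checks directly that $\gamma$ increases by $2$, and quotes the trivial lower bound $\Sb{2}\geq 2$ --- your route through Theorem \ref{stepwise} is equally valid. More significantly, in the case $n\equiv 2\pmod 3$ the paper only exhibits \emph{one} minimum bondage move (deleting a pendant edge) and computes the bondage number of that single resulting graph to be $2$; since $b(\MBG{P_n})$ is a minimum over \emph{all} minimum bondage graphs, this by itself only gives $b(\MBG{P_n})\leq 2$, and Theorem \ref{stepwise} would return the answer $2$ rather than $3$ if some other single-edge deletion produced a graph of bondage number $1$. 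Your residue computation --- that any edge $v_iv_{i+1}$ whose removal raises $\gamma(P_n)$ must split $P_n$ into two components both of order $\equiv 1\pmod 3$, each of bondage number $2$ --- is exactly what is needed to pin down $b(\MBG{P_n})=2$, so your write-up closes a step the paper leaves implicit. (In the cases where the exhibited move yields bondage number $1$, no such check is needed, since $1$ is already the global minimum.)
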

\begin{proof}
     We know from Lemma \ref{leftmost} that each pendant edge in $P_n$ is contained in a minimum bondage set. 
     
     \textit{\textbf{Case 1.} $n \equiv 0(\operatorname{mod} \:3)$.} Consider removing the two pendant edges. Let $P_n' = P_1 \oplus P_1 \oplus P_{n-2}$. So $\gamma(P_n') = 2+ \lceil\frac{n-2}{3}\rceil = 2 +\frac{n}{3}= 2+ \gamma{P_n}.$ So $\Sb{2}(P_n) = 2$. 
    
     \textit{\textbf{Case 2.} $n \equiv 1(\operatorname{mod} \:3)$.} Then $b(P_n) = 2$ by Theorem \ref{pathbondage}. Removing both pendant edges of $P_n$ will result in a disconnected graph $P_n' = (P_1 \oplus P_1 \oplus P_{n-2})$ with $\gamma(P_n') = \gamma(P_n) + 1$ by Lemma \ref{leftmost}. Note that $n - 2 \equiv 2(\operatorname{mod} \: 3)$ so $b(P_n') = b(P_{n-2}) = 1$. Thus, removing a pendant edge from $P_{n-2}$ results in the graph $P_n^{''}$ where $\gamma(P_n^{''}) = \gamma(P_n) + 2$ by Lemma \ref{leftmost}. Then, by Theorem \ref{stepwise}, we know $\Sb{2}(P_n) = 3$ for $n \equiv 1(\operatorname{mod})$.
     
     \textit{\textbf{Case 3.} $n \equiv 2(\operatorname{mod} \:3)$.} Then $b(P_n) = 1$ by Theorem \ref{pathbondage}. Removing a pendant edge from $(P_n)$ will result in a disconnected graph $P_n' = (P_1 \oplus P_{n - 1})$ with $\gamma(P_n') = \gamma(P_n) + 1$ by Lemma \ref{leftmost}. Note that $n - 1 \equiv 1(\operatorname{mod} \: 3)$ so $b(P_n^{'}) = b(P_{n - 1}) = 2$ by Theorem \ref{pathbondage}. Thus, removing the two pendant edges results in the graph $P_n^{''}$ where $\gamma(P_n^{''}) = \gamma(P_n) + 2$ by Lemma \ref{leftmost}. Thus, by Theorem \ref{stepwise}, $\Sb{2}(P_n) = 3$ for $n \equiv 2(\operatorname{mod} \: 3)$.
\end{proof}

To conclude this section, we find bounds for general graphs based on specific characteristics including the degree of a vertex and induced subgraph structure. In \cite{bauer1983domination} and \cite{Fink}, the authors show that the bondage number of a graph is bounded above by the minimum of one less than the sum of the degrees of two adjacent vertices. We can generalize their results to find an upper bound for $\Sb{2}(G)$ based on the degree of several vertices. 

\begin{theorem}\label{InducedP3}
     Let $G(V,E)$ be a graph. Then 
     $$\Sb{2}(G) \leq min\{\operatorname{deg}(u) + \operatorname{deg}(v) + \operatorname{deg}(w) - \sigma(u,v,w)\},$$
     where the minimum is over all sets $\{u,v,w\} \subseteq V$ where $v$ is adjacent to both $u$ and $w$, and $\sigma(u,v,w)$ is the size of the induced subgraph on $\{u,v,w\}$. 
\end{theorem}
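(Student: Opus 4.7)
The plan is to adapt the classical Bauer et al.\ and Fink et al.\ argument (which gives $b(G) \leq \operatorname{deg}(u)+\operatorname{deg}(v)-1$ for any adjacent pair $u,v$) to a triple of vertices forming an induced path. Fix any $u,v,w$ with $v$ adjacent to both $u$ and $w$, and let $\mathcal{E}$ be the set of all edges of $G$ incident to at least one vertex of $\{u,v,w\}$. An inclusion--exclusion argument on edges having at least one endpoint in $\{u,v,w\}$ gives
$$|\mathcal{E}| = \operatorname{deg}(u) + \operatorname{deg}(v) + \operatorname{deg}(w) - \sigma(u,v,w),$$
since each of the $\sigma(u,v,w)$ edges of the induced subgraph is counted twice in the sum of degrees and no edge can be incident to all three vertices simultaneously.

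Next I would show that removing $\mathcal{E}$ inflates the domination number by at least $2$. After the deletion the vertices $u,v,w$ are isolated, so every dominating set of $G-\mathcal{E}$ must contain all three, while the remainder of the graph induces exactly $H := G[V\setminus\{u,v,w\}]$. This yields $\gamma(G-\mathcal{E}) = 3 + \gamma(H)$. Conversely, since $v$ is adjacent to both $u$ and $w$ in $G$, any $\gamma$-set of $H$ augmented by $v$ dominates all of $G$, so $\gamma(G) \leq \gamma(H) + 1$. Combining these two observations gives $\gamma(G-\mathcal{E}) \geq \gamma(G) + 2$.

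The main obstacle, and the step requiring the most care, is that $\mathcal{E}$ as constructed only witnesses an increase of at least $2$, whereas the definition of $\Sb{2}$ demands an increase of exactly $2$. I would resolve this using the standard fact that deleting a single edge raises $\gamma$ by at most one. Ordering the edges of $\mathcal{E}$ as $e_1,\ldots,e_m$ and setting $\gamma_j = \gamma(G - \{e_1,\ldots,e_j\})$, the sequence $\gamma_0, \gamma_1, \ldots, \gamma_m$ starts at $\gamma(G)$, ends at a value at least $\gamma(G) + 2$, and each consecutive difference lies in $\{0,1\}$. Consequently, at the first index $j$ for which $\gamma_j = \gamma(G)+2$, the set $\{e_1,\ldots,e_j\}$ is a $2$-synchronous bondage set of size at most $|\mathcal{E}|$. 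Taking the minimum over all admissible triples $\{u,v,w\}$ then yields the claimed upper bound on $\Sb{2}(G)$.
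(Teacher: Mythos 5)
Your proof is correct and follows essentially the same route as the paper: both remove the full set of edges incident to $\{u,v,w\}$, count that set as $\operatorname{deg}(u)+\operatorname{deg}(v)+\operatorname{deg}(w)-\sigma(u,v,w)$, and use the fact that $v$ dominates both $u$ and $w$ to conclude that the domination number rises by at least $2$. The paper packages this as a proof by contradiction, so the requirement that the increase be \emph{exactly} $2$ is absorbed into the definition of a minimum \twoSb\ set, whereas you handle that exactness issue explicitly with the one-edge-at-a-time argument --- a worthwhile clarification, but not a different method.
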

\begin{proof}
     Let $\{u,v,w\} \subseteq V$ be such that $v$ is adjacent to both $u$ and $w$. Let $\sigma$ denote the size of the induced subgraph on $u,v,w$, and let $\lambda = \operatorname{deg}(u) + \operatorname{deg}(v) + \operatorname{deg}(w) - \sigma$. If $E'$ is the set of edges incident to $u,v,$ or $w$, then $|E'| = \lambda$.
     
     Assume, for the sake of contradiction, that $\Sb{2}(G) > \lambda$. Therefore, if $G' = G - E'$, then $u$, $v$, and $w$ are isolated in $G'$ and $\gamma(G')= \gamma(G)$ or $\gamma(G')= \gamma(G)+1$.
     If $D$ is a minimum dominating set of $G-\{u,v,w\}$, then $D \cup \{u,v,w\}$ is a minimum dominating set for $G'$ and $D \cup \{v\}$ dominates $G$. Therefore $\gamma(G') = |D|+3$ and $\gamma(G) \leq |D| + 1$ which contradicts that $\gamma(G')= \gamma(G)$ or $\gamma(G')= \gamma(G)+1$.
\end{proof}
 
\begin{theorem}\label{InducedP2s}
     Let $G(V,E)$ be a graph. Then 
     \[\Sb{2}(G) \leq min\{\operatorname{deg}(u) + \operatorname{deg}(v) + \operatorname{deg}(s) + \operatorname{deg}(t) - 2\},\]
     where the minimum is over all subsets $\{u, v, s, t\} \subseteq V$ where $uv, st \in E$ and the size of the induced subgraph on $\{u,v,s,t\}$ is two.
\end{theorem}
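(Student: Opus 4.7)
My plan is to mirror the proof of Theorem~\ref{InducedP3}, replacing the induced $P_3$ on three vertices by a pair of disjoint edges $uv, st$ on four distinct vertices. Fix such a set $\{u,v,s,t\} \subseteq V$ satisfying the hypothesis, and let $E'$ be the set of all edges of $G$ incident to at least one of $u,v,s,t$. Since $uv$ and $st$ are each double-counted by $\deg(u)+\deg(v)+\deg(s)+\deg(t)$, and the hypothesis that the induced subgraph on $\{u,v,s,t\}$ has size exactly two rules out any other edges among these four vertices, inclusion-exclusion gives $|E'| = \lambda$, where $\lambda := \deg(u)+\deg(v)+\deg(s)+\deg(t)-2$.

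Assume for contradiction that $\Sb{2}(G) > \lambda$ and set $G' = G - E'$. Then $u,v,s,t$ are isolated in $G'$, and by the same reasoning as in Theorem~\ref{InducedP3} we may conclude $\gamma(G') \in \{\gamma(G), \gamma(G)+1\}$. Now let $D$ be a minimum dominating set of the induced subgraph on $V \setminus \{u,v,s,t\}$. Since any dominating set of $G'$ must contain the four isolated vertices and the remaining vertices can only be dominated by members of $V \setminus \{u,v,s,t\}$ using edges of the induced subgraph, the set $D \cup \{u,v,s,t\}$ is a minimum dominating set of $G'$ and so $\gamma(G') = |D|+4$. On the other hand, in $G$ the set $D \cup \{v, s\}$ dominates every vertex (since $v$ covers $\{u,v\}$ and $s$ covers $\{s,t\}$), so $\gamma(G) \leq |D|+2$. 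Subtracting yields $\gamma(G') \geq \gamma(G)+2$, which contradicts $\gamma(G') \leq \gamma(G)+1$.

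The main obstacle I anticipate is the edge-counting step: the hypothesis that the induced subgraph on $\{u,v,s,t\}$ has size exactly two is precisely what forces the four vertices to be distinct and makes the $-2$ correction in $\lambda$ tight. Without this assumption one would have to subtract additional edges among the chosen vertices, weakening or invalidating the bound. Beyond that, the argument runs almost verbatim to Theorem~\ref{InducedP3}, with the only meaningful twist being that two ``hubs'' $v$ and $s$ replace the single hub $v$ when constructing a dominating set of $G$ of size at most $|D|+2$.
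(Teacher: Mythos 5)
Your proposal is correct and follows essentially the same argument as the paper's proof: count the incident edges as $\lambda = \deg(u)+\deg(v)+\deg(s)+\deg(t)-2$, assume $\Sb{2}(G) > \lambda$, isolate the four vertices, and derive the contradiction $\gamma(G') = |D|+4$ versus $\gamma(G) \leq |D|+2$. The only (immaterial) difference is that you dominate $G$ with $D \cup \{v,s\}$ where the paper uses $D \cup \{u,s\}$.
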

\begin{proof}
     Let $\{u,v,s,t\} \subseteq V$ where $uv, st \in E$ and the size of the induced subgraph on $\{u,v,s,t\}$ is two.
     Let $\lambda=\operatorname{deg}(u) + \operatorname{deg}(v) + \operatorname{deg}(s) + \operatorname{deg}(t) - 2$. If $E'$ is the set of edges incident to $u,v,s,\text{ or }, t$, then $|E'| = \lambda$.
     
     Assume, for the sake of contradiction, that $\Sb{2} > \lambda$. Therefore, if $G' = G-E'$, then $u, v, w,$ and $t$ are isolated in $G'$ and $\gamma($G'$) = \gamma(G)$ or $\gamma($G'$) = \gamma(G) +1$. 
     If $D$ is a minimum dominating set of $G-\{u,v,s,t\}$, then $D \cup \{u,v,s,t\}$ is a minimum dominating set for $G'$ and $D \cup \{u,s\}$ dominates $G$. Therefore $\gamma(G') = |D|+4$ and $\gamma(G) \leq |D| + 2$ which contradicts that $\gamma(G')= \gamma(G)$ or $\gamma(G')= \gamma(G)+1$.
\end{proof}

\section{Properties of $\Sb{k}$ and Application of $\Sb{k}$ to Graph Families}
\label{SectionSbk}
The combined size of two successive minimum bondage moves serves as an upper bound for $\Sb{2}$, so it is essential to discuss several concepts regarding the bondage number. Note that for any graph $G$ and edge $e$,  $\gamma(G)\leq \gamma(G-e)\leq \gamma(G) + 1$. This immediately implies the following proposition. 
\begin{proposition}\label{lowerboundk}
    For any graph $G$ and any positive integer $k$, $$\Sb{k}(G) \geq k.$$
\end{proposition}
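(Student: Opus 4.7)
The plan is to leverage the single-edge bound $\gamma(G) \leq \gamma(G-e) \leq \gamma(G)+1$ stated just before the proposition and iterate it across the edges of a minimum \kSb\ set. The underlying intuition is that, since each individual edge deletion can contribute at most $1$ to the increase in the dominating number, producing a total increase of $k$ requires at least $k$ such deletions.

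More concretely, I would let $\mathcal{E}$ be a minimum \kSb\ set of $G$ with $|\mathcal{E}| = m = \Sb{k}(G)$, enumerate its edges as $e_1, e_2, \ldots, e_m$ in any order, and define the sequence of graphs $G_0 = G$ and $G_i = G_{i-1} - e_i$ for $1 \leq i \leq m$, so that $G_m = G - \mathcal{E}$. By the stated single-edge bound applied to each step, $\gamma(G_i) - \gamma(G_{i-1}) \leq 1$ for every $i$. Telescoping these inequalities yields
\[
\gamma(G - \mathcal{E}) - \gamma(G) \;=\; \sum_{i=1}^{m}\bigl(\gamma(G_i) - \gamma(G_{i-1})\bigr) \;\leq\; m.
\]
Since $\mathcal{E}$ is a \kSb\ set, the left-hand side equals $k$, so $k \leq m = \Sb{k}(G)$, as desired.

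There is essentially no obstacle here: the only subtlety is that the single-edge bound is an inequality for graphs, not something specific to $G$ itself, and so it applies at every stage $G_{i-1}$ of the deletion process. I would simply note this explicitly to justify using the bound iteratively. No induction step beyond the telescoping sum is required, and the proposition follows directly in a few lines.
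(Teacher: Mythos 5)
Your proof is correct and is essentially the same argument the paper uses: the paper states the single-edge bound $\gamma(G)\leq\gamma(G-e)\leq\gamma(G)+1$ and declares the proposition an immediate consequence, and your telescoping sum over the edges of a minimum \kSb\ set is just the explicit form of that deduction.
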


Furthermore, if a graph has multiple components, the \kSbn\ can be found by taking the minimum of the sum of the $l_i$-synchronous bondage number for a subset of $i$ components where $\sum l_i = k$. We note the example where $k=2$ in the following proposition. This can be easily generalized to larger values of $k$. 

\begin{proposition}\label{disjointbondage}
    If a graph $G$ consists of $n$ components $C_1, C_2,\hdots, C_n$ where $1\leq b(C_1) \leq b(C_2) \leq \hdots \leq b(C_n)$, then
    \[\Sb{2}(G) = min\{\Sb{2}(C_1),\Sb{2}(C_2),\hdots,\Sb{2}(C_n), b(C_1) + b(C_2)\}.\]
\end{proposition}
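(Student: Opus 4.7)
The plan is to establish both inequalities in the stated equality. For the upper bound $\Sb{2}(G) \leq \min\{\Sb{2}(C_1), \ldots, \Sb{2}(C_n), b(C_1)+b(C_2)\}$, I would exhibit, for each candidate on the right-hand side, a valid 2-synchronous bondage set of $G$ of matching size. For any fixed $i$, a minimum 2-synchronous bondage set of $C_i$ has size $\Sb{2}(C_i)$; viewed as a subset of $E(G)$ it leaves the other components untouched, so by the additivity identity $\gamma(G - \mathcal{E}) = \sum_j \gamma(C_j - \mathcal{E}_j)$ for disjoint unions, $\gamma(G)$ increases by exactly $2$. Similarly, the union of a minimum bondage set of $C_1$ with one of $C_2$ has size $b(C_1) + b(C_2)$ and increases $\gamma(G)$ by $1+1 = 2$. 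Each of these sets is a 2-synchronous bondage set of $G$, so $\Sb{2}(G)$ is at most every candidate on the right.

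For the lower bound, let $\mathcal{E}$ be any 2-synchronous bondage set of $G$ of minimum size, and for each component set $\mathcal{E}_i = \mathcal{E} \cap E(C_i)$ together with $d_i = \gamma(C_i - \mathcal{E}_i) - \gamma(C_i)$. Since $\gamma$ is nondecreasing under edge removal, each $d_i \geq 0$, and additivity over components gives $\sum_i d_i = 2$. The nonnegative integer solutions to this are exactly: either some single $d_i = 2$ (others zero), or there are two distinct indices $i \neq j$ with $d_i = d_j = 1$ (others zero). In the first case, $\mathcal{E}_i$ is itself a 2-synchronous bondage set of $C_i$, so $\Sb{2}(G) = |\mathcal{E}| \geq |\mathcal{E}_i| \geq \Sb{2}(C_i)$. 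In the second case, $\mathcal{E}_i$ and $\mathcal{E}_j$ are each bondage sets of their respective components, and $\Sb{2}(G) \geq |\mathcal{E}_i| + |\mathcal{E}_j| \geq b(C_i) + b(C_j) \geq b(C_1) + b(C_2)$, where the final inequality uses the given ordering $b(C_1) \leq b(C_2) \leq \cdots \leq b(C_n)$. Taking the minimum over these cases yields the lower bound.

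The main subtlety is the case analysis on the distribution of the $d_i$'s; one must recognize that a minimum 2-synchronous bondage set need not live inside a single component, so the partitioning step is essential rather than cosmetic, and the ordering of the $b(C_\ell)$ is what collapses the cross-component case to the single candidate $b(C_1) + b(C_2)$. Once the additivity of $\gamma$ over disjoint unions is in hand, the remainder is bookkeeping, and the same outline generalizes to arbitrary $k$ by enumerating compositions of $k$ into nonnegative integer parts and matching each to the corresponding $\Sb{\ell_i}(C_i)$ values.
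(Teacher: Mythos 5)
Your proof is correct and is exactly the formalization of the argument the paper leaves implicit: the paper states only that the $k$-synchronous bondage number of a disconnected graph is obtained by distributing the increase $k$ among components, and gives no written proof of the proposition. Your two-sided argument---exhibiting candidate sets for the upper bound and partitioning a minimum set as $\mathcal{E}_i = \mathcal{E}\cap E(C_i)$ with $\sum_i d_i = 2$ for the lower bound---is the intended reasoning; the only unaddressed technicality (also glossed over by the paper) is that $\Sb{2}(C_i)$ may be undefined for small components such as $K_2$, in which case those terms should be omitted from the minimum.
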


We can also easily find the \kSbn\ of a graph if there are sufficiently many pendant edges. To do so, Lemma \ref{pendantvertminimumgamma} gives a sufficient condition for a vertex to be in a minimum dominating set.

\begin{lemma}\label{pendantvertminimumgamma}
    Let $G = (V,E)$ be a graph with vertices $r,a,b\in V$ so that $ra,rb\in E$ and $\operatorname{deg}(a) = \operatorname{deg}(b) = 1$ (i.e. $ra$ and $rb$ are pendant edges). Then any minimum dominating set must contain $r$. 
\end{lemma}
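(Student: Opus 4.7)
The plan is to argue by contradiction: assume there is some minimum dominating set $D$ of $G$ with $r\notin D$, and exhibit a strictly smaller dominating set, contradicting minimality.

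The first observation is about forced membership. Since $\deg(a)=1$ and the only neighbor of $a$ is $r$, the vertex $a$ can only be dominated by itself or by $r$. Because $r\notin D$, this forces $a\in D$. An identical argument gives $b\in D$.

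Next, I would define $D' = (D\setminus\{a,b\})\cup\{r\}$. Since $a\neq b$ and $r\notin\{a,b\}$, we have $|D'|=|D|-1$. The bulk of the work is to verify that $D'$ still dominates $V$. The vertices $a$ and $b$ are each adjacent to $r\in D'$, and $r$ itself is in $D'$. For any other vertex $v\notin\{r,a,b\}$, $v$ was dominated by $D$ via some vertex $x\in D$; I claim $x\in D\setminus\{a,b\}\subseteq D'$. Indeed, if $x=a$ then $v$ is $a$ or a neighbor of $a$, but $\deg(a)=1$ forces $v\in\{a,r\}$, contradicting $v\notin\{a,r\}$. The case $x=b$ is symmetric. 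So $x\in D\setminus\{a,b\}\subseteq D'$ and $v$ is dominated by $D'$.

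Thus $D'$ is a dominating set with $|D'|<|D|$, contradicting the assumption that $D$ is minimum. The only obstacle worth flagging is making the domination check for $v\notin\{r,a,b\}$ precise, which is handled by the pendant-degree condition; everything else is essentially forced.
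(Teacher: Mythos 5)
Your proof is correct and follows essentially the same route as the paper's: assume $r$ is omitted, observe that the pendant condition forces $a,b$ into the dominating set, and swap them for $r$ to contradict minimality. You simply spell out the verification that the swapped set still dominates, which the paper leaves implicit.
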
 

\begin{proof} 
    Let $A\subseteq V$ be any minimum dominating set of $G$, and assume for the sake of contradiction that $r\notin A$. Then, it must be true that $a,b\in A$ since these are adjacent to no other vertices. But observe that the set $A^{'}=(A\setminus \{a,b\})\cup \{r\}$ must also be a dominating set and $|A'| < |A|$, contradicting the minimality of $A$. Thus, $r\in A$. 
\end{proof}

If we have a graph that contains a vertex that is incident to more than one pendant edge, we can find its bondage number of such a graph as shown in the following theorem.

\begin{theorem}\label{2pendantbondage}
    Let $G = (V,E)$ be a graph with vertices $r,a,b\in V$ such that $ra,rb\in E$ and $\operatorname{deg}(a) = \operatorname{deg}(b) = 1$ (that is, $ra$ and $rb$ are pendant edges). Then, $Sb_1(G) = 1$. 
\end{theorem}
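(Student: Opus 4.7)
The plan is to show that the single edge $ra$ already forms a bondage set, which together with Proposition \ref{lowerboundk} (giving $Sb_1(G)\geq 1$) yields $Sb_1(G)=1$. So the goal reduces to proving $\gamma(G-ra) = \gamma(G)+1$, and since edge removal can raise the domination number by at most one, it suffices to establish the strict inequality $\gamma(G-ra) > \gamma(G)$.

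To prove that strict inequality, I would argue by contradiction. Suppose $\gamma(G-ra) \leq \gamma(G)$, and let $D'$ be a minimum dominating set of $G-ra$. First observe that $a$ is isolated in $G-ra$ (its only neighbor was $r$), so necessarily $a\in D'$. Next, note that $D'$ also dominates $G$, since adding the edge $ra$ back in only gives $a$ an additional neighbor without changing the neighborhood of any other vertex. Combined with $|D'|\leq \gamma(G)$, this forces $D'$ to be a minimum dominating set of $G$ itself.

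Now I invoke Lemma \ref{pendantvertminimumgamma} applied to $G$ (which still contains both pendant edges $ra$ and $rb$): every minimum dominating set of $G$ must contain $r$, so $r\in D'$. But then consider $D''=D'\setminus\{a\}$. In $G$, the only vertex that $a$ dominates apart from itself is $r$, and $r\in D''$, so $a$ is still dominated in $G$ by $D''$ and every other vertex remains dominated as before. Hence $D''$ is a dominating set of $G$ with $|D''| = |D'|-1 < \gamma(G)$, the desired contradiction.

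This shows $\gamma(G-ra) \geq \gamma(G)+1$, hence equality, so $\{ra\}$ is a minimum bondage set and $Sb_1(G)=1$. I do not foresee a real obstacle here; the only subtlety worth flagging is that Lemma \ref{pendantvertminimumgamma} must be applied to $G$ rather than to $G-ra$, because in $G-ra$ the vertex $r$ has lost one of the two pendants needed to invoke the lemma.
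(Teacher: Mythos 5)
Your proof is correct and follows essentially the same route as the paper's: both arguments hinge on $a$ being forced into any dominating set of $G-ra$ and on Lemma \ref{pendantvertminimumgamma} forcing $r$ into a minimum dominating set of $G$. If anything, your version is slightly more careful — by establishing $r\in D'$ before deleting $a$, you guarantee that $a$ itself remains dominated, a point the paper's proof glosses over when it asserts that $B'-\{a\}$ dominates $G$.
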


\begin{proof}
    First, define $G'=G-ra$, and let $A\subseteq V$ be a minimum dominating set of $G$. By Lemma \ref{pendantvertminimumgamma} above, we know that $r\in A$. The set $A^{'}=A\cup \{a\}$ is a dominating set in $G'$, which we claim is a minimum dominating set. To prove this, suppose for the sake of contradiction that there exists some $B^{'}\subseteq V$ so that $|B^{'}|<|A^{'}|=|A| + 1$ which dominates $G'$. Clearly, $a$ must be in $B^{'}$, so if we define $B = B^{'}-\{a\}$, this is a dominating set of $G$ with $|B|<|A|$, which was a minimal dominating set of $G$. This is a contradiction, so we must have that $A^{'}$ is a minimal dominating set of $G'$, so that the bondage number of $G$ must be one.
\end{proof}
 
Applying Theorem \ref{2pendantbondage} repeatedly to a graph $G$ which contains sufficiently many pendants, along with Proposition \ref{lowerboundk} gives the immediate result

\begin{corollary}
     Let $R \subseteq V$ be such that for all $r \in R$, $r$ is incident to at least two pendant edges in $E$. If $A=\{a \in V: d(a) = 1 \text{ and } a $\text{ is adjacent to a vertex in R}\}, then $Sb_{|A|-|R|}(G) = |A|-|R|$
\end{corollary}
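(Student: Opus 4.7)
The plan is to establish matching inequalities. The lower bound $\Sb{|A|-|R|}(G)\geq |A|-|R|$ is immediate from Proposition \ref{lowerboundk}, so the real content is to exhibit an explicit edge set of size $|A|-|R|$ whose removal increases the dominating number by exactly $|A|-|R|$.

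First I would exploit the degree-one condition to organize the pendants. Since every $a\in A$ has degree one, it has a unique neighbor, and that neighbor lies in $R$ by definition of $A$. Writing $A_r$ for the set of pendants adjacent to $r\in R$, this gives a disjoint decomposition $A=\bigcup_{r\in R}A_r$ with $|A_r|\geq 2$ for every $r$, and hence $|A|-|R|=\sum_{r\in R}(|A_r|-1)$. This identity is what tells us how many edges we need to produce.

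Next I would iteratively invoke Theorem \ref{2pendantbondage}. Fix $r\in R$ and enumerate its pendant neighbors as $a_1,\ldots,a_{|A_r|}$. Removing the edge $ra_1$ from $G$ increases the dominating number by one by Theorem \ref{2pendantbondage}, and the resulting graph still satisfies the hypothesis of that theorem at $r$ provided $|A_r|\geq 3$, since $r$ retains the pendant edges $ra_2,\ldots,ra_{|A_r|}$. Repeating the argument, we may delete $ra_1,ra_2,\ldots,ra_{|A_r|-1}$ one after another; each deletion satisfies the two-pendant hypothesis at the current stage (at least two pendants remain at $r$ until the last removal) and contributes exactly one to the dominating number. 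After these $|A_r|-1$ deletions, $r$ has exactly one pendant neighbor left, and we stop at $r$. Performing this procedure independently at every $r\in R$ is unambiguous, since deleting pendant edges at $r$ affects no edges incident to any other vertex of $R$ (pendants have a unique neighbor); the cumulative effect is $\sum_{r\in R}(|A_r|-1)=|A|-|R|$ edges deleted with the dominating number increased by exactly $|A|-|R|$, giving $\Sb{|A|-|R|}(G)\leq |A|-|R|$.

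The only technical point that requires attention is that successive applications of Theorem \ref{2pendantbondage} at a given $r$ are legitimate: this is straightforward because between consecutive steps we only delete one pendant edge $ra_i$, leaving every other pendant of $r$ intact, so the two-pendant hypothesis continues to hold until precisely one pendant remains. No deeper structural obstacle arises, and combining the two bounds yields equality.
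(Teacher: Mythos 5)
Your proposal is correct and follows the paper's own route exactly: the lower bound comes from Proposition \ref{lowerboundk}, and the upper bound comes from repeatedly applying Theorem \ref{2pendantbondage} to strip pendant edges at each $r\in R$ until one pendant remains, for a total of $\sum_{r\in R}(|A_r|-1)=|A|-|R|$ deletions. The paper states this only as an ``immediate'' consequence, so your write-up, which checks that the two-pendant hypothesis persists between iterations and that the pendant classes $A_r$ are disjoint, is simply a more detailed version of the same argument.
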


Now we move on to prove the \kSb\ for several well-known graph families: paths, cycles, trees, and complete graphs. 

\begin{theorem}\label{\Sb{k}Paths}
    For a path graph, $P_n$,
    
     $$\Sb{k}(P_n)= \begin{cases}
     \lfloor\frac{3k-1}{2}\rfloor, \: & n\equiv 0(\operatorname{mod} \: 3)\\
     \lfloor\frac{3k+1}{2}\rfloor , \: & n\equiv 1(\operatorname{mod} \: 3) \\
     \lceil\frac{3k-1}{2}\rceil, \: & n\equiv 2(\operatorname{mod} \: 3).
     \end{cases}$$
     
\end{theorem}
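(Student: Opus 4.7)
The plan is to reduce the problem to a partition question. Removing $j$ edges from $P_n$ splits it into a disjoint union $P_{n_1}\cup\cdots\cup P_{n_{j+1}}$ with $n_1+\cdots+n_{j+1}=n$, and the resulting dominating number is $\sum_i\lceil n_i/3\rceil$ since $\gamma(P_m)=\lceil m/3\rceil$. Writing $\lceil m/3\rceil=(m+r(m))/3$ with $r(m)\in\{0,2,1\}$ according to $m\equiv 0,1,2\pmod 3$, the increase in domination equals $\tfrac{1}{3}\bigl(\sum_i r(n_i)-r(n)\bigr)$, so I need $\sum_i r(n_i)=r(n)+3k$ exactly.

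Next I would introduce counts $a,b,c$ for the number of parts with sizes $\equiv 0,1,2\pmod 3$, respectively. Then the number of cuts is $j=a+b+c-1$, and the condition rewrites as $2b+c=r(n)+3k$, together with the size bound $3a+b+2c\le n$ (since parts of residue $0,1,2$ have minimum sizes $3,1,2$) and the automatic congruence $b+2c\equiv n\pmod 3$. For the lower bound I would minimize $a+b+c$ subject to $2b+c=r(n)+3k$ with $a,b,c\ge 0$: setting $a=0$ and pushing $b$ as large as possible forces $b=\lfloor(r(n)+3k)/2\rfloor$ and $c\in\{0,1\}$, so that $b+c=\lceil(r(n)+3k)/2\rceil$. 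Evaluating $\lceil(r(n)+3k)/2\rceil-1$ for $n\bmod 3\in\{0,1,2\}$ (with both parities of $k$ absorbed by the ceiling) reproduces the three branches of the theorem exactly.

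For the matching upper bound, I would exhibit an explicit cut pattern realizing the optimal $(a,b,c)$: when $n\equiv 0\pmod 3$, cut off $b$ isolated vertices (and one $P_2$ when $r(n)+3k$ is odd) from near the endpoints of $P_n$, leaving a single long piece whose size is forced into the correct residue class; the cases $n\equiv 1$ and $n\equiv 2$ are analogous. The main obstacle, I expect, is the small-$n$ boundary where the bound $3a+b+2c\le n$ is tight and one must verify that the large leftover piece has the right residue and positive length; this is exactly where the standing hypothesis $n\ge \gamma(P_n)+k$ is needed. Once the reduction to the $(a,b,c)$ optimization is set up, the argument is essentially linear-programming bookkeeping, but organizing the case analysis cleanly across three residues of $n$ and two parities of $k$ is the step that requires the most care.
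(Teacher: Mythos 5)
Your proposal is correct and takes essentially the same route as the paper: both reduce to the decomposition $P_n-\mathcal{E}=\bigoplus P_{a_i}$ with $\gamma(P_m)=\lceil m/3\rceil$, obtain the upper bound by cutting small pieces off the ends so the leftover lands in the right residue class, and obtain the lower bound by bounding the total ``excess'' $\sum_i r(a_i)$ in terms of the number of parts. Your $(a,b,c)$ integer-program packaging is just a uniform way of doing, for all three residues of $n$ at once, the per-part excess estimate that the paper carries out explicitly for $n\equiv 0\pmod 3$ (and note the small bookkeeping point that the long leftover piece must itself be one of the $b$ or $c$ parts, so you detach only $b+c-1$ small pieces).
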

\begin{proof}
We will proceed by cases.\\
\textit{\textbf{Case 1:} $n \equiv 0(\operatorname{mod} \:3)$.} First, we will show there exists a set of $\left\lfloor \frac{3k-1}{2}\right\rfloor$ edges, $E'$, so that $\gamma(P_n - E') = \gamma(P_n) + k$. Let $E'$ consists of the leftmost $\left\lfloor \frac{3k-1}{2}\right\rfloor$ edges of $P_n$. For simplicity, let $y = \left\lfloor \frac{3k-1}{2}\right\rfloor$ Thus, $\displaystyle P_n - E' = P_{n-y} \oplus\left( \bigoplus_{i=1}^y P_1\right)$.  So the dominating number of $P_n - E'$ is $$\gamma(P_n - E') = \left\lceil \frac{n-y}{3} \right\rceil + y.$$

Consider that when $k$ is odd, $y = \frac{3k-1}{2}$.  This implies $\left\lceil \frac{n+2y}{3}\right\rceil = \left\lceil\frac{n}{3} + k - \frac{1}{3}\right\rceil = \frac{n}{3}+k$. And when $k$ is even, $y = \frac{3k-1}{2} - \frac{1}{2} = \frac{3k-2}{2}$. This implies $\left\lceil \frac{n+2y}{3}\right\rceil = \left\lceil\frac{n}{3} + k - \frac{2}{3}\right\rceil = \frac{n}{3}+k$. These equalities imply that $\gamma(P_n-E') = \gamma(P_n)+k$. 

Now we move to show $\lfloor\frac{3k-1}{2}\rfloor$ is the minimum number of edges which increase $\gamma(P_n)$ by $k$. Assume for the sake of contradiction that $Sb_k(P_n) < \big\lfloor\frac{3k-1}{2}\big\rfloor$.  This implies that there is a set $\mathcal{E} \subseteq E$ such that $|\mathcal{E}|< \big\lfloor\frac{3k-1}{2}\big\rfloor$ and $\gamma(P_n - \mathcal{E}) \geq \big\lceil\frac{n}{3}\big\rceil + k = \frac{n}{3} + k$. Note that $\displaystyle P_n-\mathcal{E} = \bigoplus^{|\mathcal{E}|}_{i=1}P_{a_i}$ where $\displaystyle \sum^{|\mathcal{E}|+1}_{i=1}a_i = n$. Then
    \begingroup
    \allowdisplaybreaks
    \begin{align*}
        \gamma(P_n - \mathcal{E}) &= \sum^{|\mathcal{E}|+1}_{i=1}\gamma(P_{a_i}) \\
        &= \sum^{|\mathcal{E}|+1}_{i=1} \bigg\lceil\frac{a_i}{3}\bigg\rceil \\
        &= \sum_{a_i \equiv 0(\operatorname{mod} \:3)} \frac{a_i}{3} + \sum_{a_i \equiv 1(\operatorname{mod} \:3)} \frac{a_i + 2}{3} + \sum_{a_i \equiv 2(\operatorname{mod} \:3)} \frac{a_i + 1}{3} \\
        &= \sum^{|\mathcal{E}|+1}_{i=1} \frac{a_i}{3} +\sum_{a_i \equiv 1(\operatorname{mod} \:3)} \frac{2}{3} + \sum_{a_i \equiv 2(\operatorname{mod} \:3)} \frac{1}{3} \\
        &\leq \frac{n}{3} + \sum_{a_i \equiv 1(\operatorname{mod} \:3)}\frac{2}{3} + \sum_{a_i \equiv 2(\operatorname{mod} \:3)}\frac{2}{3} \\
        &\leq \frac{n}{3} + \frac{2}{3}(|\mathcal{E}| + 1) \\
        &\leq \frac{n}{3} + \frac{2}{3}\bigg(\bigg\lfloor\frac{3k-1}{2}\bigg\rfloor\bigg) \\
        &\leq \frac{n}{3} + \frac{2}{3}\bigg(\frac{3k-1}{2}\bigg) \\
        &= \frac{n}{3} + k - \frac{1}{3}.
    \end{align*}
    But this conflicts with our assumption that $\gamma(P_n - \mathcal{E}) \geq  \frac{n}{3}+k$.  Therefore, we have proven that $\Sb{k}(P_n)= \lfloor\frac{3k-1}{2}\rfloor$ for $n\equiv 0(\operatorname{mod} \: 3)$.
    
    Both \textit{\textbf{Case 2:} $n \equiv 1(\operatorname{mod} \:3)$} and \textit{\textbf{Case 3:} $n \equiv 2(\operatorname{mod} \:3)$} follow a similar argument as above and therefore are omitted.  
\end{proof}
\endgroup
Since removing any single edge from a cycle results in a path and the removal of that one edge does not change the cycle's dominating number, we easily derive the following theorem. 

\begin{theorem}\label{\Sb{k}Cycles}
    For a cycle graph, $C_n$,     
    $$\Sb{k}(C_n)= \begin{cases}
    \lfloor\frac{3k+1}{2}\rfloor+1 , \: & n\equiv 1(\operatorname{mod} \: 3) \\
    \lceil\frac{3k-1}{2}\rceil+1, \: & n\equiv 2(\operatorname{mod} \: 3)\\
    \lfloor\frac{3k-1}{2}\rfloor+1, \: & n\equiv 0(\operatorname{mod} \: 3).
    \end{cases}$$
\end{theorem}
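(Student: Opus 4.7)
The plan is to leverage the observation immediately preceding the theorem: for any edge $e_0 \in E(C_n)$, we have $C_n - e_0 \cong P_n$, and $\gamma(C_n) = \lceil n/3 \rceil = \gamma(P_n)$, so removing a single edge of $C_n$ does not alter the domination number. Rather than redo any of the casework on $n \pmod 3$, I would establish the single identity $\Sb{k}(C_n) = \Sb{k}(P_n) + 1$; substituting the three values from Theorem \ref{\Sb{k}Paths} then yields the three stated formulas directly.

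For the upper bound, I would fix any edge $e_0 \in E(C_n)$ and let $E' \subseteq E(C_n - e_0)$ be a minimum $k$-synchronous bondage set of the resulting path. Then
$$\gamma\bigl(C_n - (\{e_0\} \cup E')\bigr) = \gamma(P_n - E') = \gamma(P_n) + k = \gamma(C_n) + k,$$
so $\{e_0\} \cup E'$ is a $k$-synchronous bondage set of $C_n$ of size $\Sb{k}(P_n) + 1$. For the lower bound, let $\mathcal{E}$ be a minimum $k$-synchronous bondage set of $C_n$. Since $k \geq 1$, $\mathcal{E}$ is nonempty; pick any $e_0 \in \mathcal{E}$ and set $E' = \mathcal{E} \setminus \{e_0\}$. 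Then $C_n - \mathcal{E} = P_n - E'$ with $\gamma(P_n - E') = \gamma(C_n) + k = \gamma(P_n) + k$, so $E'$ is itself a $k$-synchronous bondage set of $P_n$. This forces $|E'| \geq \Sb{k}(P_n)$, and hence $|\mathcal{E}| \geq \Sb{k}(P_n) + 1$.

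The argument is essentially mechanical once the one-edge reduction to paths is in place, so I do not expect any genuinely hard step. The only mildly subtle point is the inequality $|E'| \geq \Sb{k}(P_n)$ in the lower bound: a priori $\Sb{k}(P_n)$ is the size of a \emph{minimum} $k$-synchronous bondage set, but by the definition adopted in this paper no strictly smaller edge set of $P_n$ can achieve $\gamma(P_n) + k$, so any such $E'$ does have at least $\Sb{k}(P_n)$ edges. With the identity $\Sb{k}(C_n) = \Sb{k}(P_n) + 1$ in hand, the three cases of the theorem follow by adding $1$ to the corresponding case of Theorem \ref{\Sb{k}Paths}.
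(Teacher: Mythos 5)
Your argument is correct and is exactly the reduction the paper intends: the paper offers only the one-sentence remark that deleting a single edge from $C_n$ yields $P_n$ without changing the domination number, and your two-sided proof of the identity $\Sb{k}(C_n) = \Sb{k}(P_n) + 1$ is the natural (and complete) formalization of that remark. The residue classes and the three formulas line up with Theorem \ref{\Sb{k}Paths} plus one, so nothing further is needed.
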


For the bondage number of trees, \cite{Fink} established the following upper bound.
 
\begin{theorem}\label{BondageTrees}
    If $T$ is a nontrivial tree, then $b(T) \leq 2$.
\end{theorem}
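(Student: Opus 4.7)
The plan is a longest-path argument that reduces to Theorem \ref{2pendantbondage} in one case and produces an explicit two-edge bondage set in the other. I will let $P \colon v_0, v_1, \ldots, v_d$ be a longest path in $T$, so that $v_0$ is a leaf; the key geometric observation is that, by maximality of $P$, every neighbor of $v_1$ besides $v_2$ must itself be a leaf, for otherwise $P$ could be extended past $v_0$.

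The degenerate case $d = 1$ corresponds to $T = P_2$, for which $b(T) = 1$ can be checked directly. For $d \geq 2$, I split on $\operatorname{deg}(v_1)$. If $\operatorname{deg}(v_1) \geq 3$, the observation above forces $v_1$ to be incident to at least two pendant edges (the edge $v_0 v_1$ and at least one other), and Theorem \ref{2pendantbondage} immediately gives $b(T) = 1 \leq 2$.

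If $\operatorname{deg}(v_1) = 2$, so that $v_0$ and $v_2$ are the only neighbors of $v_1$, my plan is to exhibit $\{v_0 v_1,\, v_1 v_2\}$ as a bondage set. Writing $T'' = T - \{v_0, v_1\}$, the graph $T - \{v_0 v_1, v_1 v_2\}$ is the disjoint union of the two isolated vertices $v_0, v_1$ and the (possibly smaller) tree $T''$, whence
\[
\gamma(T - \{v_0 v_1, v_1 v_2\}) = 2 + \gamma(T'').
\]
Pairing this with the inequality $\gamma(T) \leq 1 + \gamma(T'')$, which I obtain by adjoining $v_1$ to any minimum dominating set of $T''$, yields $\gamma(T - \{v_0 v_1, v_1 v_2\}) \geq \gamma(T) + 1$, proving $b(T) \leq 2$.

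The only step that requires a moment of care is the inequality $\gamma(T) \leq 1 + \gamma(T'')$: one must check that for any minimum dominating set $D''$ of $T''$, the set $\{v_1\} \cup D''$ dominates all of $T$. The potential worry is that cutting the edge $v_1 v_2$ might leave $v_2$ stranded, but this is a non-issue because $D''$ already dominates $v_2$ from within $T''$, while $v_1$ covers $v_0$ and itself. Beyond this, only routine bookkeeping, plus noting that $T''$ is nonempty whenever $d \geq 2$ so that the formula $\gamma(T'') + 2$ is well defined, is required.
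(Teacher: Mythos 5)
Your proof is correct: the longest-path setup, the reduction to Theorem~\ref{2pendantbondage} when $\deg(v_1)\geq 3$, and the explicit two-edge bondage set $\{v_0v_1, v_1v_2\}$ with the inequality $\gamma(T)\leq \gamma(T'')+1$ all check out. The paper itself offers no proof of this theorem---it is quoted from Fink et al.---and your argument is essentially the classical one from that source, so there is nothing to reconcile.
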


We can extend Theorem \ref{BondageTrees} to provide a range of values for $Sb_k$, and these values are sharp.

\begin{corollary}\label{\Sb{k}Trees}
    Given a tree, $T$, with at least $k$ edges, then $k \leq \Sb{k}(T)\leq 2k$ and the bounds are sharp.
\end{corollary}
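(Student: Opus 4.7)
The plan is to prove the lower bound from Proposition \ref{lowerboundk}, the upper bound by iterating Theorem \ref{BondageTrees} $k$ times on nontrivial tree components, and the sharpness claim by exhibiting two tree families. The inequality $k \leq \Sb{k}(T)$ is immediate from Proposition \ref{lowerboundk} applied to $G = T$.

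For the upper bound I would iterate bondage. Setting $T^{(0)} = T$, at step $i$ the graph $T^{(i-1)}$ is a forest with $\gamma(T^{(i-1)}) = \gamma(T) + (i-1)$. By the paper's standing assumption $|V(T)| \geq \gamma(T) + k$, we have $\gamma(T^{(i-1)}) < |V(T)|$, so some component $C^{(i)}$ of $T^{(i-1)}$ has at least one edge and is therefore a nontrivial subtree. Theorem \ref{BondageTrees} supplies an edge set of size at most two inside $C^{(i)}$ whose removal raises $\gamma(C^{(i)})$ by one; since $\gamma$ is additive across forest components, the total dominating number rises by one as well. Iterating $k$ times removes at most $2k$ edges and raises $\gamma$ by exactly $k$, giving $\Sb{k}(T) \leq 2k$.

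For sharpness of the lower bound, the star $K_{1,k}$ has $\gamma = 1$, and deleting $j < k$ edges yields $K_{1,k-j}$ together with $j$ isolated vertices of dominating number $1+j < k+1$, so all $k$ edges are required; hence $\Sb{k}(K_{1,k}) = k$. For the upper bound, $P_4$ realizes $\Sb{1} = 2$ by Theorem \ref{pathbondage}; for general $k$ I would construct the tree $T_k$ obtained from $k$ vertex-disjoint copies $a_i b_i c_i d_i$ of $P_4$ by adding the spine edges $b_i b_{i+1}$ for $i = 1, \ldots, k-1$. One checks $\gamma(T_k) = 2k$ (each pair $\{a_i,b_i\}$ and $\{c_i,d_i\}$ must contribute a distinct element to any dominating set, and $\{b_i,c_i : 1 \leq i \leq k\}$ attains this), and removing the $2k$ pendant edges $a_i b_i$ and $c_i d_i$ isolates every $a_i$ and $d_i$ while leaving a caterpillar on the spine with leaves $c_i$ of dominating number $k$; so $\Sb{k}(T_k) \leq 2k$.

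The main obstacle is the matching lower bound $\Sb{k}(T_k) \geq 2k$. I would argue by a global counting bound: deleting any $2k - 1$ edges from $T_k$ (which has $4k-1$ edges) produces a forest on $4k$ vertices with exactly $2k$ components. Writing $s$ for the number of singleton components, at least one component is not an isolated vertex so $s \leq 2k - 1$, and any non-singleton tree on $n_j$ vertices satisfies the Ore bound $\gamma \leq n_j/2$. Summing gives
\[
\gamma(T_k - \mathcal{E}) \leq s + \frac{4k-s}{2} \leq 3k - \frac{1}{2},
\]
so $\gamma(T_k - \mathcal{E}) \leq 3k - 1 < \gamma(T_k) + k$, showing that $2k - 1$ edge removals cannot suffice and giving $\Sb{k}(T_k) = 2k$.
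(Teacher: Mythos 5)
Your proposal is correct, and the bounds themselves ($k \leq \Sb{k}(T) \leq 2k$ via Proposition \ref{lowerboundk} and iterated use of Theorem \ref{BondageTrees}, with the star $K_{1,k}$ witnessing the lower bound) match the paper; in fact you are more careful than the paper in justifying the iteration, since you check that each intermediate forest still has a nontrivial component before invoking Theorem \ref{BondageTrees}. Where you genuinely diverge is the sharpness of the upper bound. The paper uses a spider $S^*_k$ on $2k+2$ vertices with $\gamma = k+1$ and the extremal observation that a graph of order $n$ has $\gamma = n-1$ only if it has exactly one edge, so all but one of the $2k+1$ edges must go. You instead build a chain of $k$ copies of $P_4$ glued along the $b_i$'s, verify $\gamma(T_k)=2k$ and $\gamma(T_k - \mathcal{E})=3k$ for the $2k$ pendant edges, and then rule out $2k-1$ removals by a component count combined with Ore's bound $\gamma \leq n_j/2$ for components without isolated vertices. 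Your counting argument is sound (removing $j \leq 2k-1$ edges leaves at most $2k$ components, at most $2k-1$ of them singletons, giving $\gamma \leq 2k + \frac{s}{2} \leq 3k - \frac{1}{2} < 3k$, and monotonicity of $\gamma$ under edge deletion handles smaller $j$), but note that Ore's theorem is imported from outside the paper's toolkit, whereas the paper's spider argument is self-contained and shorter. The trade-off is that your family has $\gamma$ growing like $n/2$ and illustrates sharpness inside a more ``generic'' caterpillar-type tree, while the paper's example concentrates all the difficulty at a single high-degree root.
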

\begin{proof}
    This follows immediately from  Proposition \ref{lowerboundk} and repeated iterations of Theorem \ref{BondageTrees}. The lower bounds is sharp if we consider a star graph with $k$ edges, whose dominating number is 1. To increase the dominating number by $k$ we must remove all $k$ edges. 
    To show that the upper bound is also sharp, we will define a special spider graph, $S^*_k$, as a rooted tree with $2k+2$ vertices so that the root vertex will have $k$ children of degree 2 and one child of degree 1. Notice Figure \ref{TreeGraph} is $S^*_2$. Notice that $|V(S^*_k)|=2k+2$, $|E(S^*_k)|=2k+1$, and $\gamma(S^*_k) = k+1$. If we want to find $\Sb{k}(S^*_k)$ we must produce a graph that has a dominating number of $2k+1$. For a graph or order $n$ vertices to have a dominating number of $n-1$, the graph must have only 1 edge. Therefore $\Sb{k}(S^*_k) = 2k+1-1=2k$. 
\end{proof}

\begin{figure}[htb]
     \centering
     \includegraphics[scale = .7]{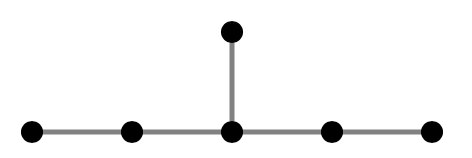}
     \caption{$\Sb{2}(T)= 4.$}
     \label{TreeGraph}
\end{figure}

The result for $\Sb{k}(K_n)$ for complete graphs stems from the following theorem from \cite{Vizing1965} which gives an upper bound for the number of edges in a graph with a specific dominating number.
 
\begin{theorem}\label{Vizing} 
    If $G$ is a graph of order $n$ and $2 \leq \gamma(G) \leq n$, then the number of edges of $G$ is at most $\fl{(n - \gamma(G))(n - \gamma(G) + 2)}{2}$. Equality occurs if and only if $G$ is the disjoint union of $\gamma(G) - 2$ isolated vertices and a graph obtained by removing from an $(n - \gamma(G) + 2)$-clique the edges belonging to a minimum covering.
\end{theorem}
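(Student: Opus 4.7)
The plan is to prove the inequality and its equality condition by induction on $\gamma = \gamma(G)$, with $\gamma = 2$ as the base case and an inductive step that peels off isolated vertices.

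For the base case $\gamma = 2$, the hypothesis that no single vertex dominates $G$ forces every vertex to have at least one non-neighbor distinct from itself, i.e.\ the complement $\overline{G}$ has no isolated vertex. A standard extremal fact is that any graph of order $n$ with no isolated vertex has at least $\lceil n/2 \rceil$ edges, with equality precisely for minimum edge covers of $K_n$ (a perfect matching when $n$ is even, or a matching together with one path of length two when $n$ is odd). Complementing, $|E(G)| \leq \binom{n}{2} - \lceil n/2 \rceil = \lfloor n(n-2)/2 \rfloor$, which matches $\lfloor (n-\gamma)(n-\gamma+2)/2 \rfloor$ at $\gamma = 2$, and equality holds precisely when $G$ is $K_n$ minus a minimum edge cover of $K_n$.

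For the inductive step at $\gamma \geq 3$, first suppose $G$ has an isolated vertex $v$. Then $v$ belongs to every dominating set of $G$, so $\gamma(G - v) = \gamma - 1$. The inductive hypothesis applied to $G - v$, which has order $n-1$ and dominating number $\gamma - 1$, yields exactly the same numerical bound $\lfloor (n-\gamma)(n-\gamma+2)/2 \rfloor$, and the induction further identifies the extremal $G - v$ as $\gamma - 3$ isolated vertices together with $K_{n-\gamma+2}$ minus a minimum edge cover; restoring $v$ adds one more isolated vertex, giving the claimed extremal structure.

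The main obstacle is the remaining case, in which $\gamma \geq 3$ and $G$ has no isolated vertex. To close the induction one must show that \emph{strict} inequality holds here, so that equality actually forces an isolated vertex to exist. My approach would be to fix a minimum dominating set $D = \{v_1, \ldots, v_\gamma\}$ and exploit the private neighborhoods of its members, namely $P_i = N[v_i] \setminus N[D \setminus \{v_i\}]$, each of which is nonempty by minimality of $D$. Combining the nonemptiness of every $P_i$ with the absence of isolated vertices, one argues that too many edges must be committed to sustaining these private neighborhoods to simultaneously achieve the clique-like density needed for equality. Setting up this edge-accounting rigorously, which is the technical heart of Vizing's original 1965 argument, is where I expect the real work to lie.
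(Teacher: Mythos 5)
This theorem is quoted in the paper from Vizing's 1965 article and is not proved there, so the only question is whether your argument stands on its own; it does not yet. Your base case $\gamma = 2$ is sound: $\gamma(G)\ge 2$ is equivalent to $\overline{G}$ having no isolated vertex, the minimum-edge-cover bound $|E(\overline{G})|\ge\lceil n/2\rceil$ gives $|E(G)|\le\binom{n}{2}-\lceil n/2\rceil=\lfloor n(n-2)/2\rfloor$, and the equality analysis correctly identifies $\overline{G}$ as a perfect matching or a matching plus a $P_3$ (you should still verify that these extremal graphs really have $\gamma=2$ rather than something larger, but that is routine). The inductive step that peels off an isolated vertex is also correct, since an isolated vertex lies in every dominating set and the numerical bound is invariant under $(n,\gamma)\mapsto(n-1,\gamma-1)$.

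The genuine gap is the case $\gamma\ge 3$ with no isolated vertex. Here the induction gives you nothing: there is no isolated vertex to delete, so neither the inequality nor its strictness is inherited from a smaller instance, and both must be proved outright. Your sketch --- that ``too many edges must be committed to sustaining these private neighborhoods'' --- is not an argument; the nonempty private neighborhoods of a minimum dominating set certify only that certain edges are \emph{present}, not that the total edge count falls below $\lfloor(n-\gamma)(n-\gamma+2)/2\rfloor$. Nor is this a fringe case: since the extremal graphs of the theorem have exactly $\gamma-2\ge 1$ isolated vertices, every graph you must rule out lives precisely here, so the entire content of the bound for $\gamma\ge 3$ beyond the trivial reduction is concentrated in this case. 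A workable route is a different reduction --- for instance, delete the closed neighborhood $N[v]$ of a suitably chosen vertex, use $\gamma(G-N[v])\ge\gamma(G)-1$, and carefully bound the number of edges meeting $N[v]$ --- but that counting is exactly the technical core of Vizing's proof and is absent from your proposal.
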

 
Using Theorem \ref{Vizing}, we present the following corollary.

\begin{corollary}\label{\Sb{k}Complete}
    For any complete graph, $K_n$, $$\Sb{k}(K_n)=\binom{n}{2} - \bigg\lfloor\frac{(n-k-1)(n-k+1)}{2}\bigg\rfloor.$$
\end{corollary}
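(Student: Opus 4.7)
The plan is to apply Theorem \ref{Vizing} to $G' = K_n - \mathcal{E}$ for any candidate $k$-synchronous bondage set $\mathcal{E}$. Since $\gamma(K_n) = 1$, such an $\mathcal{E}$ is precisely an edge set with $\gamma(G') = k+1$, and because $k \geq 1$ this value is at least $2$, so Vizing's inequality applies and yields $|E(G')| \leq \lfloor (n-k-1)(n-k+1)/2 \rfloor$. Rearranging immediately produces the lower bound
\[
\Sb{k}(K_n) \geq \binom{n}{2} - \left\lfloor \frac{(n-k-1)(n-k+1)}{2} \right\rfloor.
\]

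For the matching upper bound I would realize the equality case of Theorem \ref{Vizing}. Setting $m = n - k + 1$, let $F$ be a minimum edge cover of $K_m$ and form $G^* = (K_m - F) \oplus \overline{K}_{k-1}$. Using $|F| = \lceil m/2 \rceil$, a brief parity check on $m$ shows $|E(G^*)| = \binom{m}{2} - \lceil m/2 \rceil = \lfloor (n-k-1)(n-k+1)/2 \rfloor$, so $\mathcal{E}^* = E(K_n) \setminus E(G^*)$ has exactly the target cardinality. It remains only to confirm that $\gamma(G^*) = k+1$, which makes $\mathcal{E}^*$ a genuine $k$-synchronous bondage set and closes the upper bound.

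The main obstacle is this verification of $\gamma(G^*)$. Writing $H = K_m - F$, the $k-1$ isolated vertices of $\overline{K}_{k-1}$ are forced into every dominating set of $G^*$, so it suffices to prove $\gamma(H) = 2$. The inequality $\gamma(H) \geq 2$ is immediate: because $F$ covers every vertex of $K_m$, each vertex of $H$ has at least one non-neighbor, so no vertex dominates $H$ alone. For $\gamma(H) \leq 2$ I would pick any $uv \in F$ and argue that $\{u,v\}$ dominates $H$; a vertex $w \notin \{u,v\}$ could fail to be dominated only if $uw, vw \in F$, but a minimum edge cover of $K_m$ is a perfect matching when $m$ is even and a $P_3$ plus a matching when $m$ is odd, so every vertex has $F$-degree at most $2$ and, in the odd case, the two $F$-neighbors of the unique degree-$2$ vertex are not joined by an $F$-edge. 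Hence no such $w$ exists, $\gamma(H) = 2$, and the upper and lower bounds coincide.
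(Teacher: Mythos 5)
Your proposal is correct and follows essentially the same route as the paper: Vizing's bound (Theorem \ref{Vizing}) applied to $K_n-\mathcal{E}$ gives the lower bound, and the extremal graph of the equality case realizes the upper bound. The only difference is that you explicitly construct that extremal graph $\left(K_{n-k+1}-F\right)\oplus \overline{K}_{k-1}$ and verify its edge count and that its dominating number is $k+1$, whereas the paper simply invokes the equality statement of Theorem \ref{Vizing}; your verification is a welcome filling-in of details the paper leaves implicit.
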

\begin{proof}
    From \cite{Vizing1965}, we know that for any simple graph of order $n$ and dominating number $d$, the maximum number of edges that the graph can have is
    \[\bigg\lfloor\frac{(n-d)(n-d+2)}{2}\bigg\rfloor.\]
    Clearly this graph must be a subgraph of $K_n$. Note also that $K_n$ has $\binom{n}{2}$ edges and $\gamma(K_n)=1$, and so we can remove 
    \[\binom{n}{2} - \bigg\lfloor\frac{(n-k-1)(n-k+1)}{2}\bigg\rfloor\]
    edges to leave only the subgraph mentioned in Theorem \ref{Vizing} with dominating number $k+1$ for any positive integer $k$. The quantity above must be the minimum number of edges we must remove to increase the dominating number of $K_n$ by $k$, because if there were any smaller edge set $E$ whose removal would increase the dominating number by $k$, then the resulting graph $K_n-E$ would contradict Theorem \ref{Vizing}.
    Thus, $$\Sb{k}(K_n)=\binom{n}{2} - \bigg\lfloor\frac{(n-k-1)(n-k+1)}{2}\bigg\rfloor.$$
\end{proof}

 
\section{Conclusion}

In this paper, we have considered applying the idea of $\Sb{k}$ to sparse graphs such as paths, cycles, and trees and the extremely dense complete graph. Investigating $\Sb{k}$ for additional graph families such as grid graphs or $r$-regular graphs would enable a better understanding of real life applications of $\Sb{k}$. The interaction between $\Sb{k}$ and graph operators, including disjoint unions, is another area to consider. 

Furthermore, in order to create sharp bounds without regard to graph families, we must delve into the $G(n,m)$ problem; that is, given any graph on $n$ vertices with $m$ edges, what is the maximum number of edges that we might have to remove in order to cause a failure state? In this way, we avoid being restricted by whether a particular graph belongs to a graph family for which $\Sb{k}$ has been previously determined. If we could state for certain that the removal of $e$ edges would guarantee a failure state, we could turn our focus to efficiently determining which $e$ edges need to be removed.

Our presentation of specific $\Sb{2}$ properties and $\Sb{k}$ in general aims to extend the idea of a bondage number and thereby provide a new criteria for a failure state in a network. Since, in Section 3, we demonstrated that it is possible for a \twoSb\ move to be more effective than the successive one-step counterparts, our proposal cannot be lightly dismissed. To better evaluate the benefits of studying $\Sb{2}$, we would need to establish how much more efficient a \twoSb\ move can be. Similarly, we would like to determine the increase in efficiency achieved by \kSb\ moves when compared to successive $n$-synchronous bondage moves where $n < k$ and $n$ is a factor of $k$. \\


\end{document}